\documentclass[a4paper]{amsart}

\usepackage{xcolor}
\usepackage[utf8]{inputenc}
\usepackage[T1]{fontenc}
\usepackage{lmodern}
\usepackage{amssymb,amsxtra}
\usepackage{nicefrac,mathtools}
\usepackage[lite]{amsrefs}
\newcommand*{\MRref}[2]{ \href{http://www.ams.org/mathscinet-getitem?mr=#1}{MR \textbf{#1}}}

\newcommand*{\arxiv}[1]{\href{http://www.arxiv.org/abs/#1}{arXiv: #1}}

\renewcommand{\PrintDOI}[1]{\href{http://dx.doi.org/\detokenize{#1}}{doi: \detokenize{#1}}}

\BibSpec{book}{%
  +{}  {\PrintPrimary}                {transition}
  +{,} { \textit}                     {title}
  +{.} { }                            {part}
  +{:} { \textit}                     {subtitle}
  +{,} { \PrintEdition}               {edition}
  +{}  { \PrintEditorsB}              {editor}
  +{,} { \PrintTranslatorsC}          {translator}
  +{,} { \PrintContributions}         {contribution}
  +{,} { }                            {series}
  +{,} { \voltext}                    {volume}
  +{,} { }                            {publisher}
  +{,} { }                            {organization}
  +{,} { }                            {address}
  +{,} { \PrintDateB}                 {date}
  +{,} { }                            {status}
  +{}  { \parenthesize}               {language}
  +{}  { \PrintTranslation}           {translation}
  +{;} { \PrintReprint}               {reprint}
  +{.} { }                            {note}
  +{.} {}                             {transition}
  +{} { \PrintDOI}                   {doi}
  +{} { available at \url}            {eprint}
  +{}  {\SentenceSpace \PrintReviews} {review}
}

\usepackage{enumitem} 
\setlist[enumerate,1]{label=\textup{(\arabic*)}}

\usepackage{tikz}
\usetikzlibrary{matrix,calc,arrows,decorations.pathmorphing}
\tikzset{node distance=2cm, auto}

\tikzset{cd/.style=matrix of math nodes,row sep=2em,column sep=2em, text height=1.5ex, text depth=0.5ex}
\tikzset{cdar/.style=->,auto}
\tikzset{mid/.style={anchor=mid}} 
\tikzset{narrowfill/.style={inner sep=1pt, fill=white}}

\usepackage{microtype}
\usepackage[pdftitle={C*-algebras that are not groupoid C*-algebras},
  pdfauthor={Alcides Buss, Aidan Sims},
  pdfsubject={Mathematics}
]{hyperref}

\numberwithin{equation}{section}
\theoremstyle{plain}
\newtheorem{theorem}[equation]{Theorem}

\newtheorem{corollary}[equation]{Corollary}
\theoremstyle{definition}

\theoremstyle{remark}
\newtheorem{remark}[equation]{Remark}


\newcommand*{\nb}{\nobreakdash}
\newcommand*{\Star}{\(^*\)\nobreakdash-}

\newcommand*{\C}{\mathbb C}
\newcommand*{\Z}{\mathbb Z}

\newcommand*{\N}{\mathbb N}

\newcommand*{\Torus}{\mathbb T}
\newcommand*{\Bound}{\mathbb B}
\newcommand*{\Comp}{\mathbb K}
\newcommand*{\red}{\textup{r}} 
\newcommand*{\op}{\textup{op}} 
\newcommand*{\oo}{\textup{o}} 
\newcommand*{\univ}{\textup{u}} 
\newcommand*{\A}{\mathcal A}
\newcommand*{\E}{\mathcal E}
\newcommand*{\Rr}{\mathcal R}
\newcommand*{\Ss}{\mathcal S}

\newcommand*{\Sect}{\mathfrak C_c}


\newcommand*{\cont}{\textup C}
\newcommand*{\contz}{\cont_0}





\newcommand*{\defeq}{\mathrel{\vcentcolon=}}
\newcommand*{\congto}{\xrightarrow\sim}

\newcommand*{\braket}[2]{\langle#1{\mid}#2\rangle}

\newcommand*{\s}{s}
\newcommand*{\rg}{r}

\newcommand*{\into}{\hookrightarrow}
\newcommand*{\onto}{\twoheadrightarrow}


\newcommand*{\cstar}{\texorpdfstring{$C^*$\nobreakdash-\hspace{0pt}}{*-}}

\newcommand*{\dd}{\,\mathrm d}

\newcommand*{\sbe}{\subseteq} 

\begin{document}
\title{Opposite algebras of groupoid $C^*$-algebras}

\author{Alcides Buss}
\email{alcides@mtm.ufsc.br}
\address{Departamento de Matem\'atica\\
 Universidade Federal de Santa Catarina\\
 88.040-900 Florian\'opolis-SC\\
 Brazil}

\author{Aidan Sims}
\email{asims@uow.edu.au}
\address{School of Mathematics and Applied Statistics \\
The University of Wollongong\\
NSW  2522\\ Australia}

\begin{abstract}
We show that every groupoid \cstar{}algebra is isomorphic to its opposite, and deduce
that there exist \cstar{}algebras that are not stably isomorphic to groupoid
\cstar{}algebras, though many of them are stably isomorphic to twisted groupoid
\cstar{}algebras. We also prove that the opposite algebra of a section algebra of a Fell
bundle over a groupoid is isomorphic to the section algebra of a natural opposite bundle.
\end{abstract}
\subjclass[2010]{46L05, 22A22}
\thanks{The first author is supported by CNPq (Brazil). The second author was supported by the Australian Research Council grant DP150101598.
We would like to thank Chris Phillips and Ilijas Farah for helpful discussions and references to examples of non-self-opposite C*-algebras.}
\maketitle


\section{Introduction}
\label{sec:introduction}

Groupoids are among the most widely used models for operator algebras. It is therefore a
basic question whether a given \cstar{}algebra $A$ can be realised as $C^*(G)$ or
$C^*_\red(G)$ for some locally compact topological groupoid $G$. Many classes of
\cstar{}algebras have groupoid models: for example, graph \cstar{}algebras and
higher-rank graph \cstar{}algebras, \cstar{}algebras of actions of inverse semigroups,
and \cstar{}algebras associated to foliations. Moreover, it follows from the main results
in \cite{Exel-Pardo:Self-similar} that every Kirchberg \cstar{}algebra (that is, every
separable, simple, nuclear, purely infinite \cstar{}algebra) has an \'etale groupoid
model.

We show in this paper that not every \cstar{}algebra has a groupoid model. We achieve
this by showing that all groupoid \cstar{}algebras are self-opposite in the sense that
they are isomorphic to their opposite \cstar{}algebras.

Several examples of non-self-opposite \cstar{}algebras are already known. The first,
produced by Connes \cite{Connes:Factor}, is a non-self-opposite von Neumann factor.
Later, examples of non-self-opposite separable \cstar{}algebras were found by Phillips in
\cite{Phillips:Continuous-trace}. All of Phillips' examples are continuous-trace
\cstar{}algebras, hence nuclear. Simple and separable non-self-opposite $C^*$-algebras
are constructed in \cite{Phillips-Viola:simple_exact, Phillips:simple}; these examples
are non-nuclear, though the one in \cite{Phillips-Viola:simple_exact} is exact. It
remains open whether there exists a simple, separable and nuclear non-self-opposite
\cstar{}algebra~\cite{Phillips-Dadarlat-Hirshberg:Simple_nuclear}. This is related to
Elliott's conjecture (see \cite{Rordam:Classification_survey}) because the Elliott
invariant (essentially $K$-groups) used in the conjecture cannot distinguish a
\cstar{}algebra $A$ from its opposite $A^\op$.

Although our result implies the existence of \cstar{}algebras with no groupoid model, it
is still possible that such \cstar{}algebras can be realised as twisted groupoid
\cstar{}algebras. That is, they could be isomorphic to $C^*(G,\Sigma)$ or
$C^*_\red(G,\Sigma)$, for some twist $\Sigma$ over a groupoid $G$. A twist over $G$ is
essentially the same thing as a Fell line bundle $L$ over $G$, and $C^*(G, \Sigma)$ and
$C^*_{\red}(G,\Sigma)$ are then the corresponding full and reduced cross-sectional
\cstar{}algebras $C^*(G,L)$ and $C^*_{\red}(G,L)$. Renault proves in
\cite{Renault:Cartan.Subalgebras} that every \cstar{}algebra $A$ admitting a Cartan
subalgebra $\contz(X)\subseteq A$ is isomorphic to $C^*_\red(G,\Sigma)$ for some (second
countable, locally compact Hausdorff) \'etale essentially principal groupoid $G$ with
$G^0=X$ and some twist $\Sigma$ on $G$; furthermore, the pair $(G,\Sigma)$ is uniquely
determined by the Cartan pair $(A,\contz(X))$.

Kumjian, an Huef and Sims proved in \cite{Huef-Kumjian-Sims:Dixmier-Douady} that every
Fell \cstar{}algebra (in particular, every continuous-trace \cstar{}algebra) is Morita
equivalent to a \cstar{}algebra with a diagonal subalgebra in the sense of Kumjian
\cite{Kumjian:Diagonals}. These diagonal subalgebras are exactly the Cartan subalgebras
(in the sense of Renault) with the \emph{unique extension property}: every pure state of
the Cartan subalgebra $\contz(X)$ extends uniquely to $A$. The corresponding twist
$(G,\Sigma)$ that describes $(A,\contz(X))$ is over a principal, not just essentially
principal, groupoid $G$. After stabilisation, these results imply that all
continuous-trace \cstar{}algebras have a twisted groupoid model---including the examples
of Phillips in \cite{Phillips:Continuous-trace} that do not admit untwisted groupoid
models. The point is that the opposite algebra of $C^*(G, \Sigma)$ arises as the
$C^*$-algebra $C^*(G, \overline{\Sigma})$ of the conjugate twist, and this corresponds to
taking the negative of the associated Dixmier--Douady invariant.

We elucidate the above phenomenon by describing the opposite \cstar{}algebras
$C^*(G,\A)^\op$ and $C^*_\red(G,\A)^\op$ of the cross-sectional algebras of arbitrary
Fell bundles $\A$ over locally compact groupoids. Specifically, given a Fell bundle $\A$
over $G$, we construct an appropriate opposite bundle $\A^\oo$ over $G$, and prove that
$C^*(G,\A)^\op\cong C^*(G,\A^\oo)$. This can also be described in terms of the conjugate
Fell bundle $\bar\A$. In the special case of a Fell line bundle $L$ (that is, a twist
over $G$), this corresponds to the conjugate line bundle. When $L$ is the trivial line
bundle, $\bar{L} = L$, and $C^*_{\red}(G; L)$ and $C^*(G; L)$ coincide with
$C^*_{\red}(G)$ and $C^*(G)$, so we recover our earlier result as a special case.

For a Fell bundle associated to an action $\alpha$ of a locally compact group $G$ on a
\cstar{}algebra $A$, our result is equivalent to the statement that the opposite
\cstar{}algebras of the full and reduced crossed products $A\rtimes_\alpha G$ and
$A\rtimes_{\alpha,\red} G$ are isomorphic to $A^\op\rtimes_{\alpha^\op}G$ and
$A^\op\rtimes_{\alpha^\op,\red}G$ (where $\alpha^\op$ is the action of $G$ on $A^\op$
determined by $\alpha$ upon identifying $A$ and $A^\op$ as linear spaces); this was
proved for full crossed products in \cite{Phillips-Dadarlat-Hirshberg:Simple_nuclear}.

\section{Groupoid C*-algebras and their opposites}

In this section we show that the full and reduced C*-algebras of a locally compact,
locally Hausdorff groupoid with Haar system are self-opposite. We first briefly recall
how these \cstar{}algebras are defined.

Let $G$ be a locally compact and locally Hausdorff groupoid with Hausdorff unit space
$G^0$ and a (continuous) left invariant Haar system $\lambda=\{\lambda^x\}_{x\in G^0}$.
Let $\Sect(G,\lambda)$ be the \Star{}algebra of compactly supported, quasi-continuous
sections, that is, the linear span of continuous functions  with compact support $f\colon
U\to \C$ on open Hausdorff subsets $U\sbe G$. These functions are extended by zero off
$U$ and hence viewed as functions $G\to \C$. The continuity of $\lambda$ means that every
such function is mapped to a continuous function $\lambda(f)\colon G^0\to \C$ via
$\lambda(f)(x)\defeq \int_G f(g)\dd\lambda^x(g)$. By definition, $\lambda^x$ is a Radon
measure on $G$ with support $G^x\defeq \rg^{-1}(x)$ for all $x\in G^0$. Recall that the
product and involution on $\Sect(G,\lambda)$ are defined by
\[
(f_1*f_2)(g)\defeq \int_G f_1(h)f_2(h^{-1}g)\dd\lambda^{\rg(g)}(h)
    \quad\text{ and }\quad
f^*(g)\defeq \overline{f(g^{-1})}.
\]
Under these operations and the inductive-limit topology, $\Sect(G,\lambda)$ is a
topological \Star{}algebra. The $I$-norm on $\Sect(G,\lambda)$ is defined by
\[
    \|f\|_I\defeq \max\{\|\lambda(|f|)\|_\infty,\|\lambda(|f^*|)\|_\infty\}.
\]
The $L^1$-Banach \Star{}algebra of $G$ is the completion of $\Sect(G,\lambda)$ with
respect to $\|\cdot\|_I$; we denote it by $L^1_I(G,\lambda)$. The full \cstar{}algebra of
$G$ is the universal enveloping \cstar{}algebra of $L^1_I(G,\lambda)$; in other words, it
is the \cstar{}completion of $\Sect(G,\lambda)$ with respect to the maximum
$\|\cdot\|_I$-bounded \cstar{}norm:
\[
    \|f\|_\univ\defeq \sup\{\|\pi(f)\| : \pi \mbox{ is an $I$-norm decreasing \Star{}representation of }\Sect(G,\lambda)\}.
\]
The \emph{regular representations} of $(G,\lambda)$ are the representations
\[
\pi_x\colon \Sect(G,\lambda)\to \Bound(L^2(G_x,\lambda_x)),\quad x \in G^{(0)}
\]
given by $\pi_x(f)\xi(g)\defeq (f*\xi)(g)=\int_G f(gh)\xi(h^{-1})\dd\lambda^x(h)$. Here
$\lambda_x$ is the image of $\lambda^x$ under the inversion map $G\to G$, $g\mapsto
g^{-1}$; so it is a measure with support $G_x=\s^{-1}(x)$. The system of measures
$(\lambda_x)_{x\in G^0}$ is a right invariant Haar system on $G$.

The regular representations of $G$ give rise to a $\|\cdot\|_I$-bounded \cstar{}norm
called the \emph{reduced \cstar{}norm}:
\[
    \|f\|_\red\defeq \sup_{x\in G^0}\|\pi_x(f)\|.
\]
The \emph{reduced \cstar{}algebra} of $G$ is the completion of $\Sect(G,\lambda)$ with
respect to $\|\cdot\|_\red$. It is denoted by $C^*_\red(G,\lambda)$.

Given a groupoid $G$, we write $G^\op$ for the opposite groupoid, equal to $G$ as a
topological space, but with $(G^\op)^{(2)} = \{(h,g) : (g,h) \in G^{(2)}\}$ and
composition given by $h \cdot_\op g = gh$. We write $\lambda^\op$ for the Haar system on
$G^\op$ defined as the image of $\lambda$ under the inversion map regarded as a
homeomorphism of $G$ onto $G^\op$.

\begin{theorem}\label{theo:groupoid-algebras-symmetric}
Let $(G,\lambda)$ be a locally compact, locally Hausdorff groupoid with Haar system. The
inversion map $g\mapsto g^{-1}$ defines an isomorphism $(G,\lambda)\cong
(G^\op,\lambda^\op)$ of topological groupoids with Haar systems.  Given $f\in
\Sect(G,\lambda)$, define $f^\op : G \to \C$ by $f^\op(g)\defeq f(g^{-1})$. Then
$f\mapsto f^\op$ is an isomorphism $\Sect(G,\lambda)\congto \Sect(G,\lambda)^\op$ of
topological \Star{}algebras. This isomorphism extends to a Banach \Star{}algebra
isomorphism $L^1_I(G,\lambda)\congto L^1_I(G,\lambda)^\op$ and to \cstar{}algebra
isomorphisms $C^*(G,\lambda)\congto C^*(G,\lambda)^\op$ and $C^*_\red(G,\lambda)\congto
C^*_\red(G,\lambda)^\op$.
\end{theorem}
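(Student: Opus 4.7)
The plan is to carry out the proof in three stages, each extending the previous.

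\emph{Stage 1: groupoid and \Star{}algebra isomorphisms.}
The assertion that inversion $g\mapsto g^{-1}$ defines an isomorphism $(G,\lambda)\cong(G^\op,\lambda^\op)$ of topological groupoids with Haar systems is essentially tautological: inversion is a self-inverse homeomorphism, the identity $(gh)^{-1} = h^{-1}g^{-1} = g^{-1}\cdot_\op h^{-1}$ shows it reverses composition, and $\lambda^\op$ is by definition the pushforward of $\lambda$ under inversion. Consequently the pullback $f\mapsto f^\op$ is a linear homeomorphism of $\Sect(G,\lambda)$ in the inductive-limit topology. To see it is a \Star{}algebra isomorphism onto $\Sect(G,\lambda)^\op$, I would verify the involution identity $(f^*)^\op(g) = \overline{f(g)} = (f^\op)^*(g)$ and the anti-multiplicativity $(f_1*f_2)^\op = f_2^\op * f_1^\op$; the latter I would obtain by expanding $(f_1 * f_2)(g^{-1})$ and applying left invariance of $\lambda$ via the substitution $h = g^{-1}k$.

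\emph{Stage 2: $L^1_I$ and the full \cstar{}algebra.}
Since $\lambda_x$ is by definition the pushforward of $\lambda^x$ under inversion, a direct change of variable gives $\lambda(|f^\op|)(x) = \int|f(g^{-1})|\dd\lambda^x(g) = \int|f|\dd\lambda_x = \lambda(|f^*|)(x)$, and symmetrically $\lambda(|(f^\op)^*|) = \lambda(|f|)$; hence $\|f^\op\|_I = \|f\|_I$ and the isomorphism extends isometrically to $L^1_I$. The full \cstar{}algebra case then follows by universality: the enveloping \cstar{}algebra functor sends any Banach \Star{}algebra $B^\op$ to $C^*(B)^\op$, so applying it to $L^1_I(G,\lambda) \cong L^1_I(G,\lambda)^\op$ yields $C^*(G,\lambda) \cong C^*(G,\lambda)^\op$.

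\emph{Stage 3: the reduced \cstar{}algebra (the main obstacle).}
The reduced norm is not characterised by a universal property, so this step needs direct work. The cleanest trick is complex conjugation: writing $\bar f(g)\defeq\overline{f(g)}$ for the pointwise complex conjugate, a direct computation gives $(f^\op)^* = \bar f$, so the \cstar{}property yields $\|f^\op\|_\red = \|(f^\op)^*\|_\red = \|\bar f\|_\red$. It therefore suffices to show $\|\bar f\|_\red = \|f\|_\red$. Letting $C\xi(g)\defeq\overline{\xi(g)}$ be the antiunitary involution of $L^2(G_x,\lambda_x)$ given by pointwise conjugation, the defining formula $\pi_x(f)\xi(g) = \int f(gh)\xi(h^{-1})\dd\lambda^x(h)$ readily yields $C\pi_x(f)C = \pi_x(\bar f)$. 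Since conjugation by an antiunitary preserves operator norms, $\|\pi_x(\bar f)\| = \|\pi_x(f)\|$ for every $x$, and $\|\bar f\|_\red = \|f\|_\red$ follows by taking the supremum over $x$. Chaining these equalities, $\|f^\op\|_\red = \|f\|_\red$, and the \Star{}algebra isomorphism extends to the required $C^*_\red(G,\lambda) \cong C^*_\red(G,\lambda)^\op$.
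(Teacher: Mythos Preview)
Your proposal is correct. Stages~1 and~2 match the paper's proof essentially verbatim: inversion as a groupoid isomorphism, direct verification of involution and anti-multiplicativity via left invariance, the $I$-norm isometry, and universality of the enveloping \cstar{}algebra. Stage~3 is where you diverge. The paper handles the reduced norm by exhibiting, for each $x$, a unitary $L^2(G_x,\lambda_x)\to L^2(G^x,\lambda^x)=L^2(G_x^\op,\lambda_x^\op)$ (namely $\xi\mapsto\xi^\op$) that intertwines the regular representation $\pi_x$ of $\Sect(G,\lambda)$ with the regular representation $\pi_x^\op$ of $\Sect(G^\op,\lambda^\op)$, so $\|f\|_\red=\|f^\op\|_\red$ directly. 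You instead stay entirely within $G$: the identity $(f^\op)^*=\bar f$ reduces the problem to $\|\bar f\|_\red=\|f\|_\red$, which you get from the antiunitary $C$ on $L^2(G_x,\lambda_x)$ satisfying $C\pi_x(f)C=\pi_x(\bar f)$. Your route is the ``conjugate algebra'' approach that the paper mentions as an alternative in the Remark immediately following the theorem; it has the mild advantage of never invoking $G^\op$ or its regular representations, while the paper's route keeps the opposite-groupoid picture uniform throughout. Both are equally short and elementary.
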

\begin{proof}
The map $g\mapsto g^{-1}$ is a homeomorphism $G \to G^\op$ and satisfies $g^{-1}
\cdot_\op h^{-1} = h^{-1} g^{-1} = (gh)^{-1}$, so it is an isomorphism $G\congto G^\op$
of topological groupoids. The range (resp. source) map of $G^\op$ is the source (resp.
range) map of $G$, and the inversion map sends the left invariant Haar system
$\lambda=(\lambda^x)_{x\in G^0}$ on $G$ to the right invariant Haar system
$(\lambda_x)_{x\in G^0}$, which is precisely $\lambda^\op$. This yields the isomorphism
$(G,\lambda)\cong (G^\op,\lambda^\op)$. The map $f\mapsto f^\op$ is a linear involution
(in particular, a bijection) which is clearly a homeomorphism with respect to the
inductive-limit topology. It is also clearly isometric for the $\|\cdot\|_I$-norms on
$\Sect(G,\lambda) $ and $\Sect(G^\op,\lambda^\op)$. So to prove that it is topological
\Star{}algebra isomorphism $\Sect(G,\lambda)\cong \Sect(G,\lambda)^\op$ and extends to
isomorphisms $L^1_I(G,\lambda)\cong L^1_I(G,\lambda)^\op$ and $C^*(G,\lambda)\cong
C^*(G,\lambda)^\op$, it suffices to show that $f\mapsto f^\op$ is a \Star{}homomorphism.
For $f\in \Sect(G,\lambda)$,
\[
    (f^\op)^*(g)=\overline{f^\op(g^{-1})}=\overline{f(g)}=f^*(g^{-1})=(f^*)^\op(g).
\]
So $f\mapsto f^\op$ preserves involution. If $f_1,f_2\in \Sect(G,\lambda)$, then
\begin{equation}\label{eq:conv1}
(f_1*f_2)^\op(g)=(f_1*f_2)(g^{-1})=\int_G f_1(h)f_2(h^{-1}g^{-1})\dd\lambda^{\s(g)}(h),
\end{equation}
while
\begin{equation}\label{eq:conv2}
\begin{split}
(f_2^\op*f_1^\op)(g)
    &= \int_G f_2^\op(h)f_1(h^{-1}g)\dd\lambda^{\rg(g)}(h)\\
    &= \int_Gf_1(g^{-1}h)f_2(h^{-1})\dd\lambda^{\rg(g)}(h).
\end{split}
\end{equation}
Making the change of variables $h\mapsto gh$ and applying left invariance of $\lambda$
shows that~\eqref{eq:conv1} and~\eqref{eq:conv2} are equal.

To prove that $C^*_\red(G,\lambda)\cong C^*_\red(G,\lambda)^\op$, observe that the map
$f\mapsto f^\op$ gives an isomorphism $L^2(G_x,\lambda_x)\cong
L^2(G^x,\lambda^x)=L^2(G_x^\op,\lambda_x^\op)$ which induces a unitary equivalence
between the regular representations $\pi_x\colon \Sect(G,\lambda)\to
\Bound(L^2(G_x,\lambda_x)$ and $\pi_x^\op\colon \Sect(G^\op,\lambda^\op)\to
\Bound(L^2(G^\op_x,\lambda^\op_x)$. This yields the equality $\|f\|_\red=\|f^\op\|_\red$
which shows that $f \mapsto f^\op$ extends to an isomorphism $C^*_\red(G,\lambda)\cong
C^*_\red(G^\op,\lambda^\op)$.
\end{proof}

\begin{remark}
Similar arguments to those above show that the identity map on $G$, regarded as an
anti-multiplicative homeomorphism from $G$ to $G^\op$, induces (by composition) an
anti-multiplicative linear isomorphism $\Sect(G,\lambda) \cong \Sect(G^\op,\lambda^\op)$,
and therefore a topological $^*$-algebra isomorphism $\Sect(G,\lambda)^\op\cong
\Sect(G^\op,\lambda^\op)$. This latter extends to isomorphisms $L^1_I(G,\lambda)^\op\cong
L^1_I(G^\op,\lambda^\op)$, $C^*(G,\lambda)^\op\cong C^*(G^\op,\lambda^\op)$, and
$C^*_\red(G,\lambda)^\op\cong C^*_\red(G^\op,\lambda^\op)$.

Another way to prove Theorem~\ref{theo:groupoid-algebras-symmetric} is to work with
\emph{conjugate algebras}. If $A$ is a \Star{}algebra, its conjugate \Star{}algebra $\bar
A$ is the conjugate vector space of $A$ endowed with the same algebraic operations as
$A$. Involution, $a \mapsto a^*$ is then a linear anti-multiplicative isomorphism $A \to
\bar A$ and therefore an isomorphism $A^\op\cong \bar A$. We have $\Sect(G,\lambda)\cong
\overline{\Sect(G,\lambda)}$ via $\xi\mapsto \bar\xi$ and this extends to isomorphisms
$L^1_I(G,\lambda)\cong \overline{L^1_I(G,\lambda)}$, $C^*(G,\lambda)\cong
\overline{C^*(G,\lambda)}$ and $C^*_\red(G,\lambda)\cong \overline{C^*_\red(G,\lambda)}$.
\end{remark}

\begin{corollary}
There are (nuclear, separable) \cstar{}algebras that are not isomorphic to either
$C^*(G,\lambda)$ or $C^*_\red(G,\lambda)$ for any locally compact, locally Hausdorff
groupoid with Haar system.
\end{corollary}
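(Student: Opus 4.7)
The plan is to combine Theorem~\ref{theo:groupoid-algebras-symmetric} with a known example of a non-self-opposite separable nuclear \cstar{}algebra. The theorem asserts that for every locally compact, locally Hausdorff groupoid $(G,\lambda)$ with Haar system, both $C^*(G,\lambda)$ and $C^*_\red(G,\lambda)$ are isomorphic to their own opposite algebras. So any \cstar{}algebra that fails to be self-opposite cannot be isomorphic to $C^*(G,\lambda)$ or $C^*_\red(G,\lambda)$ for any such $(G,\lambda)$.

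The second ingredient is already supplied by the literature: Phillips~\cite{Phillips:Continuous-trace} constructs separable continuous-trace \cstar{}algebras $A$ with $A\not\cong A^\op$. Since continuous-trace \cstar{}algebras are type~I, and in particular nuclear, and since Phillips' examples are explicitly separable, such an $A$ meets the hypotheses of the corollary. Applying Theorem~\ref{theo:groupoid-algebras-symmetric}, if $A$ were isomorphic to $C^*(G,\lambda)$ or $C^*_\red(G,\lambda)$ for some $(G,\lambda)$, then $A\cong A^\op$, contradicting the defining property of Phillips' example. Hence $A$ is a (nuclear, separable) \cstar{}algebra not isomorphic to any groupoid \cstar{}algebra.

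There is essentially no obstacle here: the combinatorial content is entirely in Theorem~\ref{theo:groupoid-algebras-symmetric} on one side and in the cited construction of Phillips on the other. The only thing worth spelling out in the proof itself is the logical implication (self-opposite under the hypothesis, contradiction with the chosen $A$) and an explicit pointer to \cite{Phillips:Continuous-trace} for the existence and properties of the example; the adjectives ``nuclear'' and ``separable'' should be justified by noting that continuous-trace \cstar{}algebras are type~I and that the examples are constructed to be separable.
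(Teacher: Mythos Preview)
Your proposal is correct and follows essentially the same approach as the paper: both combine Theorem~\ref{theo:groupoid-algebras-symmetric} with the existence of nuclear, separable, non-self-opposite \cstar{}algebras from \cite{Phillips:Continuous-trace}. The paper's proof is terser (a single sentence citing \cite{Phillips:Continuous-trace, Phillips-Dadarlat-Hirshberg:Simple_nuclear}), while you spell out why continuous-trace implies nuclear, but the argument is the same.
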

\begin{proof}
It is known that there are examples of nuclear and separable \cstar{}algebras that are
not self-opposite \cite{Phillips:Continuous-trace,
Phillips-Dadarlat-Hirshberg:Simple_nuclear}.
\end{proof}

Let us say that a \Star{}algebra $A$ is \emph{self-opposite} if $A \cong A^\op$. Our main
result says that given a topological groupoid with Haar system $(G,\lambda)$, the
\Star{}algebras $\Sect(G, \lambda)$, $L^1_I(G, \lambda)$, $C^*(G, \lambda)$ and
$C^*_{\red}(G,\lambda)$ are all self-opposite. Both the minimal and the maximal tensor
product of self-opposite \cstar{}algebras are again self-opposite because $(A\otimes
B)^\op\cong A^\op\otimes B^\op$.

Let $\Comp$ denote the \cstar{}algebra of compact operators on a separable, infinite
dimensional Hilbert space; writing $\Rr$ for the equivalence relation $\N \times \N$
regarded as a discrete principal groupoid, we have $\Comp \cong C^*(\Rr) =
C^*_{\red}(\Rr)$. Hence the preceding paragraph shows that every self-opposite
\cstar{}algebra is also stably self-opposite. The converse fails in general: Phillips
constructs in \cite{Phillips:Continuous-trace} examples of (separable, continuous-trace)
non-self-opposite \cstar{}algebras which are stably self-opposite. But Phillips also
constructs examples of (separable, continuous-trace) \cstar{}algebras that are not stably
self-opposite. This yields the following:

\begin{corollary}\label{cor:no-groupoid-C-algebra-continuous-trace}
There are separable continuous-trace \cstar{}algebras that are not stably isomorphic to
any groupoid \cstar{}algebra.
\end{corollary}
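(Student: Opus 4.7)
The plan is to derive the corollary from Theorem~\ref{theo:groupoid-algebras-symmetric} together with Phillips' construction in \cite{Phillips:Continuous-trace} of separable continuous-trace \cstar{}algebras that are not stably self-opposite. The proof is essentially a contrapositive: if every separable continuous-trace \cstar{}algebra were stably isomorphic to a groupoid \cstar{}algebra, then all such algebras would be stably self-opposite, contradicting Phillips.

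To make this precise, I would first observe that self-oppositeness is preserved under (minimal or maximal) tensor products, because for any \cstar{}algebras $A$ and $B$ one has $(A\otimes B)^\op \cong A^\op\otimes B^\op$. The compact operators $\Comp$ on a separable, infinite-dimensional Hilbert space form the groupoid \cstar{}algebra $C^*(\Rr)=C^*_\red(\Rr)$ of the discrete principal equivalence relation $\Rr=\N\times\N$, and so are self-opposite by Theorem~\ref{theo:groupoid-algebras-symmetric}. Hence for any locally compact, locally Hausdorff groupoid $(G,\lambda)$ with Haar system, the stabilisations $C^*(G,\lambda)\otimes\Comp$ and $C^*_\red(G,\lambda)\otimes\Comp$ are again self-opposite.

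Now suppose for contradiction that every separable continuous-trace \cstar{}algebra $A$ is stably isomorphic to some groupoid \cstar{}algebra---that is, $A\otimes\Comp \cong C^*(G,\lambda)\otimes\Comp$ or $A\otimes\Comp\cong C^*_\red(G,\lambda)\otimes\Comp$ for some $(G,\lambda)$. By the previous paragraph, the right-hand side is self-opposite, so $A\otimes\Comp$ is self-opposite, meaning $A$ is stably self-opposite. This contradicts the existence of Phillips' separable continuous-trace examples in \cite{Phillips:Continuous-trace} that are not stably self-opposite, completing the proof.

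The only nontrivial input beyond Theorem~\ref{theo:groupoid-algebras-symmetric} is the appeal to Phillips' examples; once those are invoked, the remaining argument is the short tensor-product bookkeeping above, and there is no serious obstacle.
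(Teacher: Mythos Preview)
Your argument is correct and mirrors the paper's own reasoning almost exactly: the paper likewise notes that $(A\otimes B)^\op\cong A^\op\otimes B^\op$, identifies $\Comp$ with $C^*(\Rr)$ for $\Rr=\N\times\N$ to conclude that self-opposite (in particular, groupoid) \cstar{}algebras are stably self-opposite, and then invokes Phillips' separable continuous-trace examples that are not stably self-opposite. The only cosmetic difference is that you cast the final step as a contradiction, whereas the paper states it directly.
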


\begin{remark}
By the Brown--Green--Rieffel theorem \cite{Brown-Green-Rieffel:Stable},
Corollary~\ref{cor:no-groupoid-C-algebra-continuous-trace} implies that there exist
separable \cstar{}algebras that are not Morita equivalent to a separable (or even
$\sigma$-unital) groupoid \cstar{}algebra. However, it is unclear whether these examples
could be Morita equivalent to a non-$\sigma$-unital groupoid \cstar{}algebra.
\end{remark}

In \cite{Farah:UHF-non-self-opposite}, in the framework of ZFC enriched with Jensen's
diamond principle (a strengthening of the continuum hypothesis), Farah and Hirshberg
construct examples of non-separable approximately matricial algebras (uncountable direct
limits of the CAR algebra) that are non-self-opposite, so we can also state:

\begin{corollary}
It is consistent with ZFC that there are non-separable approximately matricial (so
simple, nuclear) \cstar{}algebras that are not isomorphic to a groupoid \cstar{}algebra.
\end{corollary}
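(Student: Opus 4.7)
The plan is to combine Theorem~\ref{theo:groupoid-algebras-symmetric} with the cited construction of Farah and Hirshberg. Under ZFC plus Jensen's diamond principle, \cite{Farah:UHF-non-self-opposite} provides a non-separable approximately matricial \cstar{}algebra $A$ (a direct limit of copies of the CAR algebra over an uncountable directed set) such that $A \not\cong A^\op$. Since the CAR algebra is simple and nuclear, and both properties pass through direct limits, $A$ is automatically simple and nuclear.

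Next I would argue by contradiction: suppose that $A \cong C^*(G,\lambda)$ or $A \cong C^*_\red(G,\lambda)$ for some locally compact, locally Hausdorff groupoid $G$ with Haar system $\lambda$. By Theorem~\ref{theo:groupoid-algebras-symmetric}, both of these \cstar{}algebras are self-opposite, which would force $A \cong A^\op$ and contradict the choice of $A$. Hence no such groupoid model exists.

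There is no real obstacle here; the corollary is essentially a direct deduction from the main theorem together with the cited non-self-opposite examples. The only thing to check carefully is that the Farah--Hirshberg construction indeed yields a \cstar{}algebra in the class claimed (approximately matricial, hence simple and nuclear) and is genuinely non-self-opposite, both of which are explicit in \cite{Farah:UHF-non-self-opposite}.
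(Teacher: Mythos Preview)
Your proposal is correct and follows exactly the same reasoning as the paper: the corollary is stated immediately after the paragraph describing the Farah--Hirshberg construction, and the paper leaves the proof implicit because it is the direct combination of Theorem~\ref{theo:groupoid-algebras-symmetric} with those non-self-opposite examples. There is nothing to add.
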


Recall that the ordinary \emph{separable} AF-algebras admit groupoid models: it is even
known that they are always crossed products for a partial action of the integers, see
\cite{Exel:AF}.

By \cite{Huef-Kumjian-Sims:Dixmier-Douady}*{Theorem~6.6(1)}, every separable
continuous-trace \cstar{}algebra (indeed, every Fell algebra) is Morita equivalent to a
separable \cstar{}algebra with a diagonal subalgebra in the sense of
Kumjian~\cite{Kumjian:Diagonals}. Kumjian shows in \cite{Kumjian:Diagonals} that
\cstar{}algebras containing diagonals are, up to isomorphism, the \cstar{}algebras
obtained from twists on \'etale principal groupoids. More precisely, this means a pair
$(G,\Sigma)$ consisting of a (second countable) locally compact Hausdorff \'etale
groupoid $G$ and another (locally compact Hausdorff, second countable) topological
groupoid $\Sigma$ that fits into a central groupoid extension of the form
\[
    \Torus\times G^0\into \Sigma\onto G,
\]
where $\Torus$ denotes the circle group, and $\Torus\times G^0$ is viewed as a (trivial)
group bundle, and hence as a topological groupoid. To a twisted groupoid $(G,\Sigma)$ one
can assign a full \cstar{}algebra $C^*(G,\Sigma)$ and a reduced \cstar{}algebra
$C^*_\red(G,\Sigma)$, and then every separable \cstar{}algebra containing a diagonal
subalgebra has the form $C^*_\red(G,\Sigma)$ for some twist $\Sigma$ over a principal
groupoid $G$. Moreover, the pair $(G, \Sigma)$ is unique, up to isomorphism of twisted
groupoids. This follows from the more general result, proved by Renault
in~\cite{Renault:Cartan.Subalgebras}, that isomorphism classes of Cartan subalgebras
correspond bijectively to isomorphism classes of twisted essentially principal \'etale
groupoids (meaning twisted groupoids where $G$ is not necessarily principal, but only
essentially principal; see \cite{Renault:Cartan.Subalgebras} for details). Using these
results, we arrive at the following consequence:

\begin{corollary}
There are separable stable continuous-trace \cstar{}algebras that are not isomorphic to
any groupoid \cstar{}algebra but which are isomorphic to the reduced \cstar{}algebra of a
twisted principal \'etale groupoid.
\end{corollary}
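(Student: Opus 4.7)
The plan is to combine the continuous-trace examples of Phillips that fail to be stably self-opposite with Theorem~\ref{theo:groupoid-algebras-symmetric} and the results of Huef--Kumjian--Sims, Kumjian, and Brown--Green--Rieffel. First I would pick one of Phillips's separable continuous-trace \cstar{}algebras $A_0$ that is not stably self-opposite, and set $A \defeq A_0 \otimes \Comp$. Since continuous-trace is preserved under tensoring with $\Comp$, $A$ is separable, stable and continuous-trace. If $A$ were self-opposite, then $A_0 \otimes \Comp \cong A \cong A^\op \cong A_0^\op \otimes \Comp$, contradicting the choice of $A_0$; hence $A \not\cong A^\op$, and Theorem~\ref{theo:groupoid-algebras-symmetric} prevents $A$ from being isomorphic to any full or reduced groupoid \cstar{}algebra.

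Next I would produce a twisted principal \'etale groupoid model for $A$. By \cite{Huef-Kumjian-Sims:Dixmier-Douady}*{Theorem~6.6(1)}, $A_0$ is Morita equivalent to a separable \cstar{}algebra $B$ containing a diagonal subalgebra in the sense of Kumjian. The Brown--Green--Rieffel theorem yields $A_0 \otimes \Comp \cong B \otimes \Comp$, so $A \cong B \otimes \Comp$. Kumjian's theorem then provides a twist $\Sigma$ over a (second countable, locally compact Hausdorff) principal \'etale groupoid $G$ with $B \cong C^*_\red(G,\Sigma)$.

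To finish I would stabilise this twisted model. Writing $\Comp \cong C^*_\red(\Rr)$ for the pair groupoid $\Rr = \N \times \N$ (which is discrete, principal and \'etale), the product $G \times \Rr$ is again principal and \'etale, and pulling $\Sigma$ back along the projection $G \times \Rr \to G$ produces a twist $\Sigma'$ over $G \times \Rr$. A convolution computation then identifies $C^*_\red(G \times \Rr, \Sigma')$ with $C^*_\red(G,\Sigma) \otimes C^*_\red(\Rr) \cong B \otimes \Comp \cong A$, placing $A$ in the required form. The main obstacle I expect is this last identification: justifying that the reduced \cstar{}algebra of a product twist factors as the spatial tensor product of the reduced twisted groupoid \cstar{}algebras of the factors. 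There is no maximal-versus-reduced ambiguity since $\Comp$ is nuclear, but the tensor-splitting must be checked either by a direct argument on sections or by invoking a tensor-product theorem for twisted \'etale groupoid \cstar{}algebras (e.g.\ a special case of the Fell-bundle formalism developed later in the paper).
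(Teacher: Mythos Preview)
Your proof is correct and follows essentially the same route as the paper: start from a Phillips continuous-trace algebra that is not stably self-opposite, stabilise it, use Theorem~\ref{theo:groupoid-algebras-symmetric} to rule out a groupoid model, invoke Huef--Kumjian--Sims plus Kumjian's diagonal theorem and Brown--Green--Rieffel to get a twisted principal \'etale model for a Morita-equivalent algebra, and then absorb the stabilisation into the twisted groupoid via the product with $\Rr = \N \times \N$. The paper simply asserts the final identification $C^*_\red(G,\Sigma)\otimes\Comp \cong C^*_\red(G\times\Rr,\Sigma\times\Rr)$ without further comment, so your singling it out as the only step requiring additional justification is apt rather than a gap.
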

\begin{proof}
Let $A$ be a separable continuous-trace \cstar{}algebra which is not stably isomorphic to
any groupoid \cstar{}algebra as in
Corollary~\ref{cor:no-groupoid-C-algebra-continuous-trace}. Let $B\defeq A\otimes\Comp$
be the stabilisation of $A$. Then $B$ is a separable stable continuous-trace
\cstar{}algebra which is not isomorphic to any groupoid \cstar{}algebra.
By~\cite{Huef-Kumjian-Sims:Dixmier-Douady}*{Theorem~6.(1)} $A$ is Morita equivalent to
$C^*_\red(G,\Sigma)$, for some twisted principal \'etale groupoid $(G,\Sigma)$. It
follows from the Brown--Green--Rieffel theorem that $A\cong
C^*_\red(G,\Sigma)\otimes\Comp$. To finish the proof we observe that, again writing $\Rr$
for the discrete equivalence relation $\N \times \N$, we have $C^*_\red(G,\Sigma)\otimes
\Comp \cong C^*_\red(G \times \Rr, \Sigma \times \Rr)$.
\end{proof}

\section{Section \cstar{}algebras of Fell bundles and their opposites}

Let $G$ be a locally compact and locally Hausdorff groupoid endowed with a continuous
Haar system $\lambda$, which we fix throughout the rest of the section. In this section
we generalise our previous result and describe the opposite \cstar{}algebras of the
section \cstar{}algebras of a Fell bundles over $G$. Our result generalises the
observation in \cite{Phillips-Dadarlat-Hirshberg:Simple_nuclear} that $(A\rtimes_\alpha
G)^\op\cong A^\op\rtimes_{\alpha^\op}G$ for any action $\alpha$ of a locally compact
group $G$ on a \cstar{}algebra $A$.

Fell bundles over topological groupoids are defined in \cite{Kumjian:Fell_bundles}. Only
Hausdorff groupoids are considered there, but the same definition makes sense for locally
Hausdorff groupoids. A Fell bundle over $G$ consists of an upper semicontinuous Banach
bundle $\A$ over $G$ endowed with \emph{multiplications} $\A_g\times\A_h\to \A_{gh}$,
$(a,b)\mapsto a\cdot b$, for every composable pair $(g,h)\in G^2$ and \emph{involutions}
$\A_g\to\A_{g^{-1}}$, $a\mapsto a^*$, for every $g\in G$. These operations are required
to be continuous (with respect to the given topology on $\A$) and satisfy algebraic
conditions similar to those in the definition of a \cstar{}algebra.

We next recall, briefly, how to define the full and reduced \cstar{}algebras of a Fell
bundle. Consider the space $\Sect(G,\A)$ of compactly supported continuous sections
$\xi\colon U\to \A$ defined on open Hausdorff subspaces $U\sbe G$ and extended by zero
outside $U$ and hence viewed as sections $\xi\colon G\to \A$. The continuity of the
algebraic operations on $\A$ implies that for $\xi, \eta \in \Sect(G,\A)$, the formulas
\[
(\xi*\eta)(g)\defeq \int_G\xi(h)\cdot \eta(h^{-1}g)\dd\lambda^{\rg(g)}(h),\quad\text{ and }\quad \xi^*(g)\defeq \xi(g^{-1})^*.
\]
define elements $\xi*\eta, \xi^* \in \Sect(G,\A)$ and so determine a convolution product
$*$ and an involution $^*$ on $\Sect(G,\A)$. Under these operations, $\Sect(G, \A)$ is a
\Star{}algebra; and indeed, a topological $^*$-algebra in the inductive-limit topology.

Since the norm function on $\A$ is upper semicontinuous, the function $g\mapsto
\|\xi(g)\|$ from $G$ to $[0,\infty)$ is upper semicontinuous and hence measurable. So we
can define the $I$-norm on $\Sect(G, \A)$ by
\[
\|\xi\|_I \defeq \sup_{x \in G^{(0)}} \max\Big\{ \int_{G^x} |\xi(g)|\dd\lambda^x(g), \int_{G^x} |\xi^*(g)|\dd\lambda^x(g)\Big\}.
\]
The $L^1$-Banach algebra of $\A$, denoted $L^1_I(G,\A)$, is defined as the completion of
$\Sect(G,\A)$ with respect to $\|\cdot\|_I$. The full \cstar{}algebra $C^*(G,\A)$ of $\A$
is defined as the universal enveloping \cstar{}algebra of $L^1_I(G,\A)$: the completion
of $\Sect(G,\A)$ with respect to the \cstar{}norm
\[
\|\xi\|_\univ\defeq\sup\{\|\pi(\xi)\| : \text{$\pi$ is an $I$-norm decreasing $^*$-representation of $\Sect(G, \A)$}\}.
\]
That this is indeed a norm on $\Sect(G,\A)$, and not just a seminorm, follows from the
existence of the following regular representations.

For each $x \in G^{(0)}$, let $L^2(G_x,\A)$ be the Hilbert $\A_x$-module completion of
the space $\Sect(G_x,\A)$ of quasi-continuous sections $G_x\to \A$ with respect to the
norm induced by the $\A_x$-valued inner product
\[
\braket{\xi}{\eta}_{\A_x}\defeq \int_G\xi(h)^*\eta(h)\dd\lambda_x(h)=\int_G\xi(h^{-1})^*\eta(h^{-1})\dd\lambda^x(h).
\]
Then for each $x \in G^{(0)}$, the regular representation $\pi_x : \Sect(G, \A) \to
\Bound(L^2(G_x,\A))$ is defined by
\[
\big(\pi_x(\xi)\eta\big)(g)\defeq\int_G\xi(gh)\eta(h^{-1})\dd\lambda^x(h)=\int_G\xi(gh^{-1})\eta(h)\dd\lambda_x(h),
\]
for all $\xi\in \Sect(G,\A)$, $\eta\in \Sect(G_x,\A)$ and $g\in G_x$. The reduced
\cstar{}norm on $\Sect(G,\A)$ is defined by
\[
\|\xi\|_\red\defeq \sup_{x\in G^0}\|\pi_x(\xi)\|.
\]
This is, indeed, a norm: if $\pi_x(\xi)=0$ then $(\xi*\eta)(g)=0$ for all $\eta\in
\Sect(G_x,\A)$ and $g\in G_x$; so $\xi*\xi^*(x)=\int_G\xi(h)\xi(h)^*\dd\lambda^x(h)=0$
for all $x \in G^{(0)}$, forcing $\xi|_{G^x}=0$ for all $x$. A standard computation shows
that $\|\xi\|_\red\leq \|\xi\|_I$. Therefore $\|\cdot\|_\univ$ is also a \cstar{}norm and
$\|\cdot \|_\red \leq \|\cdot \|_\univ$. The completion of $\Sect(G,\A)$ with respect to
$\|\cdot\|_\red$ is the \emph{reduced section \cstar{}algebra} of $\A$, and is denoted by
$C^*_\red(G,\A)$.

Our goal here is to describe the opposite \cstar{}algebras $C^*(G,\A)^\op$ and
$C^*_\red(G,\A)^\op$. We show that $C^*(G,\A)^\op)\cong C^*(G,\A^\oo)$, for an
appropriate opposite Fell bundle $\A^\oo$ over $G$ associated to $\A$. It is more natural
to first define an opposite Fell bundle $\A^\op$ over the opposite groupoid $G^\op$ and
then later use the canonical anti-isomorphism $G\cong G^\op$ induced by the inversion map
to obtain the desired Fell bundle $\A^\oo$ over $G$.

The opposite Fell bundle $\A^\op$ over $G^\op$ is defined as follows. As a Banach bundle,
$\A^\op$ does not differ from $\A$. In particular, the fibres are equal, $\A^\op_g=\A_g$
for all $g\in G$, and also the topology on $\A^\op$ is equal to that on $\A$. Moreover,
$\A^\op$ is also endowed with the same involution as $\A$, which makes sense because $G$
and $G^\op$ carry the same inversion map. The only thing that changes in $\A^\op$ is the
multiplication: given $g,h\in G^\op$ the condition $\s^\op(g)=\rg^\op(h)$ means
$\rg(g)=\s(h)$, so we can use the multiplication map $\mu_{h,g}\colon \A_h\times\A_g\to
\A_{hg}$ and define the multiplication maps
\[
\mu^\op\colon \A_g^\op\times \A_h^\op=\A_g\times \A_h\to \A_{g\cdot_\op h}^\op=\A_{hg}\quad\mbox{by } \mu^\op(a,b)\defeq \mu(b,a).
\]
In other words, $a\cdot_\op b\defeq b\cdot a$ if we use $\cdot$ and $\cdot_\op$ to denote
the multiplications on $\A$ and $\A^\op$, respectively. It is straightforward to see that
$\A^\op$ is indeed a Fell bundle over $G^\op$. Now we use the anti-isomorphism
$G^\op\cong G$ induced by the inversion map $g\mapsto g^{-1}$ to form the pullback Fell
bundle of $\A^\op$. In other words, $\A^\oo$ is a Fell bundle over $G$ with fibres
$\A^\oo_g=\A_{g^{-1}}$ and the topology induced by the sections $\xi^\oo(g)\defeq
\xi(g^{-1})$ for $\xi\colon U\to \A$ a continuous section defined on a Hausdorff open
subset $U\sbe G$. The involution map $\A^\oo_g\to \A^\oo_{g^{-1}}$ is the involution map
$\A_{g^{-1}}\to \A_g$ from $\A$ and the multiplication map $\A^\oo_g\times \A^\oo_h\to
\A^\oo_{gh}$ is given by $(a,b)\mapsto b\cdot a$ for all $b\in \A^\oo_g=\A_{g^{-1}}$,
$b\in \A^\oo_h=\A_{h^{-1}}$ and $g,h\in G$ with $\s(g)=\rg(h)$.

\begin{theorem}\label{thm-fell-op-iso}
Let $\A$ be a Fell bundle over a locally compact, locally Hausdorff groupoid with Haar
system $(G,\lambda)$, and consider the Fell bundle $\A^\oo$ over $(G,\lambda)$ described
above. The map $\xi\mapsto \xi^\oo$ defined by $\xi^\oo(g)\defeq \xi(g^{-1})$ gives an
isomorphism of topological \Star{}algebras $\Sect(G,\A)^\op\congto \Sect(G,\A^\oo)$.
Moreover, this isomorphism extends to an isomorphism of Banach \Star{}algebras
$L^1_I(G,\A)^\op\congto L^1_I(G,\A^\oo)$ and \cstar{}algebras $C^*(G,\A)^\op\congto
C^*(G,\A^\oo)$ and $C^*_\red(G,\A)^\op\congto C^*_\red(G,\A^\oo)$.
\end{theorem}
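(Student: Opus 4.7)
The plan is to mimic the proof of Theorem~\ref{theo:groupoid-algebras-symmetric} step by step, treating the Fell\nb-bundle case exactly as the scalar case with additional bookkeeping for the opposite multiplication on the fibres. First I would show that $\xi \mapsto \xi^\oo$ is a well\nb-defined linear homeomorphism $\Sect(G,\A) \to \Sect(G,\A^\oo)$ in the inductive\nb-limit topology and is isometric for $\|\cdot\|_I$. Essentially all of this is built into the construction of $\A^\oo$: continuous sections of $\A^\oo$ on an open Hausdorff $U \sbe G$ are by definition the maps $\xi^\oo$ for $\xi$ a continuous section of $\A$ on $U^{-1}$, inversion preserves compact supports, and it interchanges the left and right Haar systems while preserving fibrewise norms.

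Next I would verify that $\xi \mapsto \xi^\oo$ intertwines the involutions and reverses the convolutions, so that it is a \Star{}homomorphism $\Sect(G,\A)^\op \to \Sect(G,\A^\oo)$. The involution check reduces to $(\xi^\oo)^*(g) = \xi(g)^* = (\xi^*)^\oo(g)$ because the involutions on $\A^\oo$ and $\A$ coincide. The multiplicative check is the Fell\nb-bundle analogue of equations~\eqref{eq:conv1} and~\eqref{eq:conv2}: expanding $(\eta * \xi)^\oo(g)$ and $(\xi^\oo * \eta^\oo)(g)$, substituting $h \mapsto gh$, and invoking left invariance of $\lambda$ together with the defining relation $a \cdot_\oo b = b \cdot a$ yields equality. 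This is the only place where the opposite multiplication on the fibres enters in an essential way. With this in hand, the extensions to $L^1_I(G,\A)^\op \cong L^1_I(G,\A^\oo)$ and $C^*(G,\A)^\op \cong C^*(G,\A^\oo)$ are formal consequences: the former is completion in the $I$-norm, the latter is the universal enveloping \cstar{}algebra, and both are functorial under isometric topological \Star{}isomorphisms.

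The main obstacle, I expect, is the reduced case, not because it is conceptually deep but because it requires careful Hilbert\nb-module bookkeeping. For each $x\in G^{(0)}$, noting that $\A^\oo_x = \A_{x^{-1}} = \A_x$, I would build a unitary $U_x$ of right Hilbert $\A_x$-modules $L^2(G_x,\A) \to L^2(G_x,\A^\oo)$ arising from inversion, sending a quasi\nb-continuous section $\eta : G_x \to \A$ to $g \mapsto \eta(g^{-1})$, viewed as a section of $\A^\oo$ over $G_x$. Checking that this really is a unitary amounts to rewriting the $\A_x$-valued inner product on $L^2(G_x,\A^\oo)$ using that the involution is unchanged but the product is reversed, and then applying the identity relating $\lambda^x$ and $\lambda_x$. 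A direct change\nb-of\nb-variables computation in the integral formula for $\pi_x$ then shows that $U_x\,\pi_x(\xi)\,U_x^*$ agrees with the regular representation of $\A^\oo$ applied to $\xi^\oo$, composed with the order reversal built into the opposite algebra. This forces $\|\xi\|_\red = \|\xi^\oo\|_\red$ and extends the isomorphism to $C^*_\red(G,\A)^\op \congto C^*_\red(G,\A^\oo)$.
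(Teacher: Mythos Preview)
Your outline for the topological \Star{}algebra, $I$-norm, and full \cstar{}algebra steps is correct and matches the paper's argument; the paper just chooses to phrase everything in terms of $\A^\op$ over $G^\op$ with the identity map on sections rather than $\A^\oo$ over $G$ with $\xi\mapsto\xi^\oo$, but these are equivalent via pullback along inversion.

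The reduced case, however, has a real gap. First, your proposed $U_x$ does not type-check: for $\eta\colon G_x\to\A$ and $g\in G_x$, the expression $\eta(g^{-1})$ asks for the value of $\eta$ at $g^{-1}\in G^x$, where $\eta$ is not defined; and even if you repaired the domain, the target fibre $\A^\oo_g=\A_{g^{-1}}$ would not match $\eta(g^{-1})\in\A_{g^{-1}}$ only by coincidence of notation, not by a well-defined section map. Second, and more seriously, no unitary of right Hilbert $\A_x$-modules can satisfy $U_x\pi_x(\xi)U_x^*=\pi_x^\oo(\xi^\oo)$: the left-hand side is a \Star{}homomorphism in $\xi$, while the right-hand side is an anti-homomorphism (since $\xi\mapsto\xi^\oo$ reverses products and $\pi_x^\oo$ is a homomorphism). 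Your phrase ``composed with the order reversal built into the opposite algebra'' gestures at the issue but does not resolve it.

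The paper absorbs this order reversal by changing module side. It reinterprets the right Hilbert $\A_x^\op$-module $L^2(G^\op_x,\A^\op)$ as the \emph{left} Hilbert $\A_x$-module $L^2(G^x,\A)$ (same underlying space, opposite action), on which $\pi_x^\op$ becomes the representation $\tilde\pi_x(\xi)\eta=\eta*\xi$ of $C^*(G,\A)^\op$. It then uses the general isomorphism $\Bound(_B\E)^\op\cong\Bound(\tilde\E_B)$ to the dual right module, implemented here by the conjugate-linear map $\xi\mapsto\xi^*$ from $L^2(G^x,\A)$ onto $L^2(G_x,\A)$, and checks that this carries $\tilde\pi_x^\op$ to the ordinary regular representation $\pi_x$. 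The passage right $\to$ left $\to$ dual right is exactly what converts the anti-homomorphism into a homomorphism and yields $\|\pi_x^\op(\xi)\|=\|\pi_x(\xi^\oo)\|$. If you want to stay on the right-module side throughout, the natural object is the \emph{anti}-unitary $J_x\colon L^2(G_x,\A)\to L^2(G_x,\A^\oo)$ given by $(J_x\eta)(g)=\eta(g)^*$, which does intertwine correctly once you track the extra involution; but this is precisely the dual-module manoeuvre in disguise.
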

\begin{proof}
We prove the equivalent assertion that $\Sect(G,\A)^\op \cong \Sect(G^\op,\A^\op)$ via
the canonical linear isomorphism $\Sect(G,\A)\ni\xi\mapsto \xi^\op\defeq \xi\in
\Sect(G^\op,\A^\op)$, and that this isomorphism extends to isomorphisms
\begin{gather*}
L^1_I(G,\A)^\op\congto L^1_I(G^\op,\A^\op), \quad
    C^*(G,\A)^\op\congto C^*(G^\op,\A^\op),\quad\text{and}\\
    C^*_\red(G,\A)^\op\congto C^*_\red(G^\op,\A^\op).
\end{gather*}

Since the topologies on $\A$ and $\A^\op$ are the same, the map $\xi\mapsto \xi^\op$ is
clearly a linear bijection $\Sect(G,\A)\to \Sect(G^\op,\A^\op)$ which is a homeomorphism
with respect to the inductive-limit topologies. Also, this map preserves the involution,
that is, $(\xi^\op)^*=\xi^*$ on $\Sect(G,\A)$ (which is the same as the involution on
$\Sect(G,\A)^\op$), and on $\Sect(G^\op,\A^\op)$ because the involutions on $\A$ and on
$\A^\op$ are the same. It remains to check that the map is a homomorphism
$\Sect(G,\A)^\op\to \Sect(G^\op,\A^\op)$. But, remembering that the left Haar system
$\lambda^\op$ on $G^\op$ is the right Haar system $(\lambda_x)_{x\in G^0}$ on $G$, we get
\begin{multline*}
\xi^\op*\eta^\op(g)=\int_{G^\op}\xi(h)\cdot_\op\eta(h^{-1}g)\dd(\lambda^\op)^{\rg^\op(g)}(h)\\
=\int_G\eta(gh^{-1})\xi(h)\dd\lambda_{\s(g)}(h)=\int_G\eta(gh)\xi(h^{-1})\dd\lambda^{\rg(g)}(h)=(\eta*\xi)(g)
\end{multline*}
for all $\xi,\eta\in \Sect(G,\A)$ and $g\in G$. This shows that the identity map is an
anti-homomorphism $\Sect(G,\A)\to \Sect(G^\op,\A^\op)$, that is, a homomorphism
$\Sect(G,\A)^\op\to \Sect(G^\op,\A^\op)$, as desired. A similar computation shows that
$\|\xi^\op\|_I=\|\xi\|_I$ and that therefore the identity map extends to an isomorphism
$L^1_I(G,\A)^\op\congto L^1_I(G^\op,\A^\op)$ and hence also to the corresponding
universal enveloping \cstar{}algebras $C^*(G,\A)^\op\congto C^*(G^\op,\A^\op)$.

Finally, to check that $C^*_\red(G,\A)^\op\cong C^*_\red(G^\op,\A^\op)$, fix $x\in G^0$.
The regular representation $\pi^\op_x$ defines a representation of $C^*(G^\op, \A^\op)$
by adjointable operators on the right-Hilbert $\A^\op$-module $L^2(G^\op_x, \A^\op)$. Any
right Hilbert module $\E$ over the opposite $B^\op$ of a \cstar{}algebra $B$ determines a
left Hilbert $B$\nb-module $\E$ with left $B$\nb-action $b\cdot \xi\defeq \xi\cdot b$ and
left $B$\nb-valued inner product $_B\braket{\xi}{\eta}\defeq \braket{\eta}{\xi}_{B^\op}$.
This process preserves the \cstar{}algebras of adjointable operators, meaning that the
identity map on $\E$ yields an isomorphism $\Bound(\E_{B^\op})\cong \Bound(_B\E)$.
Applying this to the right Hilbert $\A_x^\op$\nb-module $L^2(G^\op_x,\A^\op)$ we get a
left Hilbert $\A_x$-module with left $\A_x$-action given by $a\cdot \xi=\xi\cdot_\op a$
for all $\xi\in L^2(G^\op_x,\A^\op)$; the right hand side denotes the right
$\A^\op_x$-action on $L^2(G^\op_x,\A^\op)$, so it is given by $(a\cdot_\op
\xi)(g)=\xi(g)\cdot_\op a(\s^\op(g))=a(\rg(g))\cdot \xi(g)$. The left $\A_x$-valued inner
product on $L^2(G^\op,\A^\op)$ is given by
\[
_{\A_x}\braket{\xi}{\eta}
    = \braket{\eta}{\xi}_{\A_x^\op}
    = \int_{G^\op}\eta(h)^*\cdot_\op \xi(h)\dd\lambda^\op_x(h)
    = \int_G\xi(h)\eta(h)^*\dd\lambda^x(h)
\]
for all $\xi,\eta\in \Sect(G_x^\op,\A^\op)=\Sect(G^x,\A)$. Therefore the left Hilbert
$\A_x$-module obtained from the right Hilbert $\A_x^\op$-module $L^2(G^\op_x,\A^\op)$ in
this way equals the left Hilbert $\A_x$-module $L^2(G^x,\A)$ defined as the completion of
$\Sect(G^x,\A)$ with respect to the norm associated to the left $\A_x$-valued inner
product given by the above formula and the left $\A_x$-action also defined above.
Therefore we may view $\pi_x^\op$ as a representation of $C^*(G^\op,\A^\op)$ on
$\Bound(_{\A_x}L^2(G^x,\A))\cong \Bound(L^2(G^\op_x,\A^\op)_{\A^\op_x})$. Under the
isomorphism $C^*(G^\op,\A^\op)\cong C^*(G,\A)^\op$, this corresponds to the canonical
representation $\tilde\pi_x$ of $C^*(G,\A)^\op$ on $_{\A_x}L^2(G^x,\A)$ via the formula
\[
\tilde\pi_x(\xi)\eta(g)\defeq (\eta*\xi)(g)=\int_G \eta(h)\xi(h^{-1}g)\dd\lambda^x(h)
\]
for $\xi\in \Sect(G,\A)$, $\eta\in \Sect(G^x,\A)$ and $g\in G^x$. Straightforward
computations show that the above formula defines a representation $\tilde\pi_x\colon
C^*(G,\A)^\op\to \Bound(_{\A_x}L^2(G^x,\A))$ of the opposite \cstar{}algebra
$C^*(G,\A)^\op$.

Given a left Hilbert $B$-module $E$, let $\tilde\E$ denote the dual right Hilbert
$B$-module of $\E$: as a vector space $\tilde\E=\{\tilde\xi: \xi\in \E\}$ is the
conjugate of $\E$ and the right $B$-action and right $B$-valued inner product are defined
by $\tilde\xi\cdot b\defeq \widetilde{b^*\cdot \xi}$ and $\braket{\xi}{\eta}_B\defeq
_B\braket{\xi}{\eta}$.

Then each representation $\pi\colon A^\op\to \Bound(_B\E)$ of an opposite \cstar{}algebra
$A^\op$ on the \cstar{}algebra of adjointable operators $\Bound(_B\E)$ of a left Hilbert
$B$-module $\E$ induces a representation $\pi^\op\colon A\to \Bound(_B\E)^\op\cong
\Bound(\tilde\E_B)$. The isomorphism $\Bound(_B\E)^\op\cong \Bound(\tilde\E_B)$ we used
above is induced by the involution, that is, it sends an operator $T\in \Bound(_B\E)$ to
$\tilde T\in \Bound(\tilde\E_B)$ defined by $\tilde T(\tilde\xi)\defeq
(T^*(\xi))^{\sim}$.

For $\xi \in \Sect(G^x,\A)$, the formula $\xi^*(g)\defeq \xi(g^{-1})^*$ determines an
element $\xi^*\in \Sect(G_x,\A)$. The map $\xi \mapsto \xi^*$ induces an isomorphism
$(L^2(G^x,\A))^{\sim}_{\A_x}\cong L^2(G_x,\A)_{\A_x}$ from the dual Hilbert $\A_x$-module
of $_{\A_x}L^2(G^x,\A)$ to the right Hilbert $\A_x$-module $L^2(G_x,\A)$ that carries the
regular representation $\pi_x\colon C^*(G,\A)\to \Bound(L^2(G_x,\A)_{\A_x})$. This
isomorphism intertwines the representations $\pi_x\colon C^*(G,\A)\to
\Bound(L^2(G_x,\A)_{\A_x})$ and $\tilde\pi_x^\op\colon C^*(G,\A)\to
\Bound(_{\A_x}L^2(G^x,\A))^\op\cong \Bound((L^2(G^x,\A)))^{\sim}_{\A_x})$. We conclude
that
\[\|\pi_x^\op(\xi)\|=\|\tilde\pi_x(\xi^\oo)\|=\|\tilde\pi_x^\op(\xi^\oo)\|=\|\pi_x(\xi^\oo)\|.\]
Since $x\in G^0$ was arbitrary, we get the equality $\|\xi^\oo\|_\red=\|\xi\|_\red$ and
therefore the desired isomorphism $C^*_\red(G^\op,\A^\op)\cong C^*_\red(G,\A)^\op$.
\end{proof}

\begin{remark}\label{rmk-conj-bundle}
As in the case of groupoid \cstar{}algebras, we can rephrase the preceding result in
terms of conjugate bundles as well. Let $\A$ be a Fell bundle over a groupoid $G$. For $g
\in G$, let $\overline{\A}_g$ be the conjugate vector space of $\A_g$; that is,
$\overline{\A}_g$ is a copy $\{\overline{a} : a \in \A_g\}$ as an abelian group under
addition, but with scalar multiplication given by $\lambda \overline{a} =
\overline{\bar{\lambda} a}$. Via the map $a \mapsto \bar{a}$, the operations on the Fell
bundle $\A$ induce operations on $\overline{A} := \bigsqcup_{g \in G} \overline{A}_g$:
$\overline{a}\overline{b} = \overline{ab}$, and $\overline{a}^* = \overline{a^*}$. Under
these operations, $\overline{\A}$ is a Fell bundle over $G$, called the \emph{conjugate
bundle} of $\A$.

Let $\A^\oo$ be the opposite bundle of $\A$ defined above; so $\A^\oo_g = \A_{g^{-1}}$.
Then the maps $A^\oo_g \owns a \mapsto \overline{a^*} \in \overline{A}_g$ are linear
isometries because the maps $a \mapsto a^*$ and $a \mapsto \bar{a}$ are both conjugate
linear. We have $\overline{(a \cdot_\op b)^*} = \overline{(ba)^*} = \overline{a^*b^*} =
\overline{a^*}\overline{b^*}$ and $(\overline{a^*}))^* = \overline{a} =
\overline{(a^*)^*}$, so $a \mapsto \overline{a^*}$ determines an isomorphism $\A^\oo
\cong \overline{A}$ of Fell bundles over $G$. Thus Theorem~\ref{thm-fell-op-iso} shows
that there is a topological-\Star{}algebra isomorphism $\xi \mapsto \overline{\xi}$ from
$\Sect(G, \A)$ to $\Sect(G, \overline{A})$ given by $\overline{\xi}(g) \defeq
\overline{\xi(g^{-1})^*}$ that extends to isomorphisms
\[
L^1_I(G, \A) \cong L^1_I(G, \overline{A}),\quad
    C^*(G, \A) \cong C^*(G, \overline{A}),\quad\text{and}\quad
    C^*_{\red}(G, \A) \cong C^*_{\red}(G, \overline{A}).
\]
\end{remark}

\begin{remark}
Remark~\ref{rmk-conj-bundle} is closely related to the idea behind Phillips' construction
in \cite{Phillips:Continuous-trace} of non-self-opposite continuous-trace
\cstar{}algebras $A$; the observation underlying his construction is that the
Dixmier--Douady class of the opposite algebra $A^\op$ is the inverse of the
Dixmier--Douady class of $A$. To see how this relates to our results, fix a compact
Hausdorff space $X$, and let $\Ss$ denote the sheaf of germs of continuous
$\Torus$-valued functions on $X$. The Raeburn--Taylor construction
\cite{Raeburn-Taylor:Continuous-trace} shows that (after identifying $\check{H}^3(X, \Z)$
with $H^2(X, \Ss)$) any class $\delta \in H^2(X, \Ss)$ can be realised as the
Dixmier--Douady invariant of a twisted groupoid \cstar{}algebra $C^*(G, \sigma)$
associated to a continuous 2-cocycle $\sigma$ on a principal \'etale groupoid $G$ with
unit space $G^{(0)} = \bigsqcup_{i,j} U_{ij}$ for some precompact open cover $\{U_i\}$ of
$X$. The cocycle $\sigma$ determines, and is determined up to cohomology by, the Fell
line-bundle $L_\sigma$ over $G$ given by $L_\sigma = G \times \Torus$ with twisted
multiplication $(g,w)(h,z) = (gh, c(g,h)wz)$ and the obvious involution.
Remark~\ref{rmk-conj-bundle} shows that $C^*(G, \sigma)^\op$ is given by the conjugate
bundle $\overline{L_\sigma}$, so the corresponding class in $H^2(X, \Ss)$ is determined
by the pointwise conjugate of the class $\delta$; that is, $\delta(C^*(G, \sigma)) =
\delta(C^*(G, \sigma)^\op)^{-1}$.
\end{remark}

\begin{bibdiv}
  \begin{biblist}

\bib{Brown-Green-Rieffel:Stable}{article}{
  author={Brown, Lawrence G.},
  author={Green, Philip},
  author={Rieffel, Marc A.},
  title={Stable isomorphism and strong Morita equivalence of $C^*$\nobreakdash-algebras},
  journal={Pacific J. Math.},
  volume={71},
  date={1977},
  number={2},
  pages={349--363},
  issn={0030-8730},
  eprint={http://projecteuclid.org/euclid.pjm/1102811432},
}

\bib{Connes:Factor}{article}{
  author={Connes, Alain},
     title = {A factor not anti-isomorphic to itself},
   journal = {Ann. Math. (2)},
    volume = {101},
      year = {1975},
     pages = {536--554},
}

\bib{Phillips-Dadarlat-Hirshberg:Simple_nuclear}{article}{
  author={Marius Dadarlat},
  author={Ilan Hirshberg},
  author={Phillips, N. Christopher},
  title={Simple nuclear C*-algebras not equivariantly isomorphic to their opposites},
  date={2016},
  status={preprint},
  note={\arxiv{1602.04612}},
}

\bib{Exel:AF}{article}{
  author={Exel, Ruy},
  title={Approximately finite $C^*$\nobreakdash-algebras and partial automorphisms},
  journal={Math. Scand.},
  volume={77},
  date={1995},
  number={2},
  pages={281--288},
  issn={0025-5521},
  review={\MRref{1379271}{97e:46085}},
  doi={10.7146/math.scand.a-12566},
}

\bib{Exel-Pardo:Self-similar}{article}{
  author={Exel, Ruy},
  author={Pardo, Enrique},
  title={Self-similar graphs, a unified treatment of Katsura and Nekrashevych \(\textup{C}^*\)-algebras},
  status={eprint},
  date={2014},
  note={\arxiv{1409.1107}},
}

\bib{Farah:UHF-non-self-opposite}{article}{
  author={Farah, Ilijas},
  author={Hirshberg, Ilan},
  title={Simple nuclear C*-algebras not isomorphic to their opposites},
  journal={Proceedings Nat. Acad. Sciences USA},
  volume={114},
  date={2017},
  number={24},
  pages={6244–6249},
  doi={10.1073/pnas.1619936114},
}

\bib{Huef-Kumjian-Sims:Dixmier-Douady}{article}{
  author={an Huef, Astrid},
  author={Kumjian, Alex},
  author={Sims, Aidan},
  title={A Dixmier--Douady theorem for Fell algebras},
  journal={J. Funct. Anal.},
  volume={260},
  date={2011},
  number={5},
  pages={1543--1581},
  issn={0022-1236},
  review={\MRref{2749438}{2012i:46066}},
  doi={10.1016/j.jfa.2010.11.011},
}

\bib{Kumjian:Diagonals}{article}{
  author={Kumjian, Alexander},
  title={On $C^*$\nobreakdash-diagonals},
  journal={Canad. J. Math.},
  volume={38},
  date={1986},
  number={4},
  pages={969--1008},
  issn={0008-414X},
  review={\MRref{854149}{88a:46060}},
  doi={10.4153/CJM-1986-048-0},
}

\bib{Kumjian:Fell_bundles}{article}{
  author={Kumjian, Alex},
  title={Fell bundles over groupoids},
  journal={Proc. Amer. Math. Soc.},
  volume={126},
  date={1998},
  number={4},
  pages={1115--1125},
  issn={0002-9939},
  review={\MRref{1443836}{98i:46055}},
  doi={10.1090/S0002-9939-98-04240-3},
}

\bib{Phillips:Continuous-trace}{article}{
  author={Phillips, N. {Ch}ristopher},
  title={Continuous-trace C*-algebras not isomorphic to their opposite algebras},
  journal={International J. Math.},
  volume={12},
  date={2001},
  pages={263--275},
}

\bib{Phillips:simple}{article}{
    AUTHOR = {Phillips, N. Christopher},
     TITLE = {A simple separable {$C\sp *$}-algebra not isomorphic to its
              opposite algebra},
   JOURNAL = {Proc. Amer. Math. Soc.},
    VOLUME = {132},
      YEAR = {2004},
    NUMBER = {10},
     PAGES = {2997--3005 (electronic)},
       DOI = {10.1090/S0002-9939-04-07330-7},
}

\bib{Phillips-Viola:simple_exact}{article}{
  author={Phillips, N. {Ch}ristopher},
  author={M.~G. Viola},
  title={A simple separable exact C*-algebra not anti-isomorphic to itself},
  journal={Math. Ann.},
  volume={355},
  date={2013},
  pages={783--799},
}

\bib{Raeburn-Taylor:Continuous-trace}{article}{
   author={Raeburn, Iain},
   author={Taylor, Joseph L.},
   title={Continuous trace $C^\ast$-algebras with given Dixmier-Douady
   class},
   journal={J. Austral. Math. Soc. Ser. A},
   volume={38},
   date={1985},
   number={3},
   pages={394--407},
   issn={0263-6115},
   review={\MR{779202}},
}

%
\bib{Renault:Cartan.Subalgebras}{article}{
  author={Renault, Jean},
  title={Cartan subalgebras in $C^*$\nobreakdash-algebras},
  journal={Irish Math. Soc. Bull.},
  number={61},
  date={2008},
  pages={29--63},
  issn={0791-5578},
  eprint={http://www.maths.tcd.ie/pub/ims/bull61/S6101.pdf},
}

\bib{Rordam:Classification_survey}{article}{
  author={R\o rdam, Mikael},
  title={Classification of nuclear, simple \(C^*\)\nobreakdash-algebras},
  pages={1--145},
  book={
    author={R\o rdam, Mikael},
    author={St\o rmer, Erling},
    title={Classification of nuclear \(C^*\)\nobreakdash-algebras. Entropy in operator algebras},
    series={Encyclopaedia of Mathematical Sciences},
    volume={126},
    publisher={Springer},
    place={Berlin},
    date={2002},
    isbn={3-540-42305-X},
  },
  doi={10.1007/978-3-662-04825-2_1},
}

  \end{biblist}
\end{bibdiv}

\vskip 0,5 pc

\end{document}